\theoremstyle{plain}
\newtheorem{thm}{Theorem}
\newtheorem{theorem}[thm]{Theorem}
\newtheorem*{theoremA}{Theorem A}
\newtheorem*{theoremB}{Theorem B}
\newtheorem*{theoremC}{Theorem C}
\newtheorem*{theoremD}{Theorem D}
\newtheorem{conjecture}[thm]{Conjecture}
\theoremstyle{definition}
\newtheorem{definition}[thm]{Definition}
\newtheorem{remark}[thm]{Remark}
\newtheorem{example}[thm]{Example}
\newtheorem{thevarthm}[thm]{\varthmname}
\newenvironment{varthm*}[1]{\trivlist\item[]{\bf #1.}\it}{\endtrivlist}
\def\keywordname{{\bfseries Keywords}}%
\def\keywords#1{\par\addvspace\medskipamount{\rightskip=0pt plus1cm
\def\and{\ifhmode\unskip\nobreak\fi\ $\cdot$
}\noindent\keywordname\enspace\ignorespaces#1\par}}
\def\subclassname{{\bfseries Mathematics Subject Classification
(2020)}\enspace}
\def\subclass#1{\par\addvspace\medskipamount{\rightskip=0pt plus1cm
\def\and{\ifhmode\unskip\nobreak\fi\ $\cdot$
}\noindent\subclassname\ignorespaces#1\par}}
\begin{document}
\title{Defect of irreducible plane curves with simple singularities}
\author{Piotr Pokora}
\date{\today}
\maketitle
{\centering\footnotesize To Professor Arkadiusz P\l oski, in memoriam. \par}
\thispagestyle{empty}
\begin{abstract}
In this note we focus on the defect of singular plane curve that was recently introduced by Dimca. Roughly speaking, the defect of a reduced plane curve measures the discrepancy from the property of being a free curve. We find some lower-bound on the defect for certain classes of irreducible plane curves admitting nodes, ordinary cusps and ordinary triple points. The main result of the note tells us that reduced simply singular plane curves with sufficiently high Arnold exponents are never free.
\keywords{defect; simply singular plane curves}
\subclass{14N25, 14H50, 32S25}
\end{abstract}

In this note we study the defect for some classes of irreducible plane curves. This notion has been recently introduced by Dimca in \cite{Dimca1} and we have many extremely interesting questions revolving around this notion that we may want to study. Probably the most interesting conjecture devoted to the defect is \cite[Conjecture 3.7]{SurDim}, which tells us that the defect for line arrangements in $\mathbb{P}^{2}_{\mathbb{C}}$ is combinatorially determined, and this should be understood as a broad generalization of Terao's freeness conjecture. In order to present further questions, we need a solid preparation based on \cite{Dimca}.

Let $S := \mathbb{C}[x,y,z] = \bigoplus_{k}S_{k}$ be the graded polynomial. In the note we consider reduced and not necessarily irreducible curves $C \subset \mathbb{P}^{2}_{\mathbb{C}}$. We denote by $\partial_{x}, \partial_{y}, \partial_{z}$ the partial derivatives and we define ${\rm Der}(S) = \{ \partial := a\cdot \partial_{x} + b\cdot \partial_{y} + c\cdot \partial_{z}, \,\, a,b,c \in S\}$ which is the free $S$-module of $\mathbb{C}$-linear derivations of the ring $S$. Now for a reduced curve $C \, = \, \{ f=0 \}$ with  $f \in S_{d}$ being homogeneous, we define
$${\rm D}(f) = \{ \partial \in {\rm Der}(S) \, : \, \partial (f) \in \langle f \rangle \}.$$
It means that ${\rm D}(f)$ is the graded $S$-module of derivations preserving the ideal $\langle f \rangle$. Recall that for a reduced curve $C \, = \, \{ f=0 \}$ in $\mathbb{P}^{2}_{\mathbb{C}}$ we have the following decomposition \cite[pp. 151-152]{Dimca}:
$${\rm D}(f) = {\rm D}_{0}(f) \oplus S\cdot \delta_{E},$$
where $\delta_{E} = x\partial_{x} + y\partial_{y} + z\partial_{z}$ is the Euler derivation, and 
$${\rm D}_{0}(f) = \{ \partial \in {\rm Der}(S) \, : \, \partial f = 0\},$$
i.e., the set of all $\mathbb{C}$-linear derivations of $S$ killing the polynomial $f$. It is classically known, see for example \cite[p. 151]{Dimca}, that ${\rm D}_{0}(f)$ can be identified with the $S$-module of all non-trivial Jacobian relations for the partials of $f$, namely
$${\rm AR}(f) = \{ (a,b,c)\in S^{3} \, : \, a\cdot \partial_{x}\, f + b\cdot \partial_{y} \, f + c\cdot \partial_{z} \, f =0 \}.$$
We have some numerical invariants that one can associated with a curve $C \, = \, \{ f=0 \}$ in $\mathbb{P}^{2}_{\mathbb{C}}$, one of them is the minimal degree among derivations killing $f$, i.e., 
$${\rm mdr}(f) = {\rm min}\{r \in \mathbb{N} \, : \, {\rm D}_{0}(f)_{r} \neq 0 \} = {\rm min}\{r \in \mathbb{N} \, : \, {\rm AR}(f)_{r}\neq 0\}.$$
Sometimes we will write ${\rm mdr}(C)$ for a given curve $C \subset \mathbb{P}^{2}_{\mathbb{C}}$.

For a homogeneous polynomial $f \in S$ of degree $d$ we define its Jacobian ideal $J_{f} :=\langle \partial_{x}\, f , \partial_{y}\, f, \partial_{z}\, f \rangle$. Now we define by $I_{f}$ the saturation of $J_{f}$ with respect to the irrelevant ideal $\mathfrak{m} = \langle x,y,z \rangle$ as $I_{f} := \bigcup_{k \geq 0}(J_{f} : \mathfrak{m}^{k})$. The Jacobian module of $f$ is defined as
$$N(f) = I_{f} / J_{f}.$$
The Jacobian module provides information about the curve that is defined by $f \in S$. In order to show its strength, let us present the following definition, see \cite[Definition 8.1]{Dimca} or \cite[Remark 4.7]{DimcaSernesi}.
\begin{definition}
A reduced curve $C \, = \, \{ f=0 \}$ in $\mathbb{P}^{2}_{\mathbb{C}}$ defined by a homogeneous polynomial $f \in S$ is \textbf{free} if ${\rm D}(f)$, or equivalently ${\rm D}_{0}(f)$, is a free graded $S$-module. 
\end{definition}
\noindent
It turns out that the freeness of $C \, = \, \{ f=0 \}$ is equivalent to the condition $N(f)=0$, i.e., the Jacobian ideal is saturated, for details see \cite[Proposition 1.9]{ST}. 

For a reduced curve $C \, = \, \{ f=0 \}$ in $\mathbb{P}^{2}_{\mathbb{C}}$ we set 
$$n(f)_{j} = {\rm dim} \, N(f)_{j},$$ and we define the following invariant
$$\nu(C) = {\rm max}\{n(f)_{j}\}_{j}.$$
The invariant $\nu(C)$ is called the \textbf{defect}, or the \textbf{defect from the freeness property}. It is very difficult to compute the defect of a given curve $C$ by using the above definition. However, Dimca showed the following crucial result. Before we formulate it, let us fix the notation. If $C=\{f(x,y)=0\}$ is a germ of an isolated plane curve singularity at $p=(0,0)$, then we define its local Tjurina number as follows 
$$\tau_{p}(C)=\dim_\mathbb{C}\left(\mathbb{C}[x,y] /\bigg\langle f,\frac{\partial f}{\partial x},\frac{\partial f}{\partial y}\bigg\rangle \right).$$  
Now for a reduced curve $C \, = \, \{ f=0 \}$ in $\mathbb{P}^{2}_{\mathbb{C}}$ we define its total Tjurina number as
$$\tau(C) = \sum_{p \in {\rm Sing}(C)} \tau_{p}(C),$$
where ${\rm Sing}(C)$ denotes the set of all singular points of $C$.
\begin{theorem}[{\cite[Theorem 1.2]{Dimca1}}]
\label{deff}
Let $C \, = \, \{ f=0 \}$ be a reduced plane curve of degree $d$ and $r= {\rm mdr}(C)$. Then the following hold.
\begin{itemize}
    \item If $r < (d-1)/2$, then $\nu(C) = (d-1)^{2}-r(d-1-r)-\tau(C)$.
    \item If $r \geq (d-2)/2$, then
    $$\nu(C) = \bigg\lceil\frac{3}{4}(d-1)^{2}\bigg\rceil - \tau(C).$$
\end{itemize}
\end{theorem}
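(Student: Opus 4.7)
The plan is to compute the Hilbert function $k \mapsto n(f)_k$ of the Jacobian module $N(f)$ in closed form and then locate its maximum. I would organise the argument around three ingredients.

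First, I would invoke the self-duality of $N(f)$ due to Dimca--Sernesi, which provides the symmetry $n(f)_k = n(f)_{T-k}$ with $T := 3(d-2)$, so the Hilbert function is determined by its values on $k \le \lfloor T/2 \rfloor$. Second, I would use the short exact sequence $0 \to N(f) \to M(f) \to S/I_f \to 0$, where $M(f) = S/J_f$ is the Milnor algebra, to rewrite $n(f)_k = \dim M(f)_k - \dim (S/I_f)_k$. The second term, being the Hilbert function of a zero-dimensional subscheme of $\mathbb{P}^{2}_{\mathbb{C}}$ of length $\tau(C)$, stabilises to $\tau(C)$ well below the degree at which the maximum of $n(f)_k$ is attained, so it contributes the clean $-\tau(C)$ that appears in both formulas.

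The heart of the matter is controlling $\dim M(f)_k$ in the lower half $k \le \lfloor T/2 \rfloor$. Here I would work with the minimal graded free resolution of $J_f$, whose syzygy module ${\rm AR}(f)$ has a generator in the lowest degree $r$. Combining this with the obligatory Koszul syzygies in degree $d-1$ and their dual companion in degree $s := d-1-r$ (plus, in the non-free case, the higher-degree extra generators of ${\rm AR}(f)$), I would extract a closed-form polynomial expression for $\dim M(f)_k$, valid up to an explicit threshold determined by $r$. Subtracting $\tau(C)$ then gives a closed form for $n(f)_k$ in the same range.

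Finally, I would locate the maximum case by case. When $r < (d-1)/2$, the two ``parabolas'' controlling the formula for $\dim M(f)_k$ from below and above meet at $k = d-2+r$; evaluating there produces $(d-1)^{2} - r(d-1-r) - \tau(C)$, and one checks that $n(f)_k$ is non-decreasing for smaller $k$ while the Dimca--Sernesi symmetry handles larger $k$. When $r \ge (d-2)/2$, no non-Koszul syzygy trims the dimension inside the lower half-range, so $n(f)_k$ increases monotonically up to the centre $k = \lfloor 3(d-2)/2 \rfloor$, where evaluation yields $\lceil 3(d-1)^{2}/4 \rceil - \tau(C)$. The main obstacle I anticipate is the non-free case with $r < (d-1)/2$: there ${\rm AR}(f)$ has generators beyond the one in degree $r$, and one must argue that these extra generators do not enter the closed-form expression for $\dim M(f)_k$ in the range where the maximum is attained, so that the peak value is still governed purely by $r$. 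Once this is secured, the rest is a careful but routine manipulation of binomial coefficients.
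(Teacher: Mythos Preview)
The paper does not prove this theorem at all: it is quoted verbatim as \cite[Theorem 1.2]{Dimca1} and used as a black box throughout. There is therefore no ``paper's own proof'' to compare your proposal against.

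That said, your outline is essentially the route taken in the cited source. The self-duality $n(f)_k = n(f)_{3(d-2)-k}$, the short exact sequence $0 \to N(f) \to M(f) \to S/I_f \to 0$, and the stabilisation of $\dim(S/I_f)_k$ at $\tau(C)$ are exactly the structural inputs Dimca uses. Two small cautions. First, your claim that $\dim(S/I_f)_k$ stabilises ``well below'' the peak degree needs a quantitative bound on the regularity of the singular scheme; this is available but should be invoked explicitly. Second, the obstacle you flag at the end is genuine and is the crux of the argument: in the non-free case with $r < (d-1)/2$ one must show that the additional generators of ${\rm AR}(f)$ all live in degree at least $d-1-r$, so they do not affect $\dim M(f)_k$ for $k \le d-2+r$. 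This is not a formality --- it is where the actual work in \cite{Dimca1} lies, and your sketch currently defers it rather than resolving it. Once that degree bound is established, the remaining Hilbert-function bookkeeping is indeed routine.
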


There are many interesting and difficult open problems regarding the notion of the defect and here we would like to recall two the most important conjectures. The first, mentioned at the very beginning of the introduction, can be seen as a vast generalization of Terao's freeness conjecture and is devoted to line arrangements.
\begin{conjecture}
For a line arrangement $\mathcal{L} \subset \mathbb{P}^{2}_{\mathbb{C}}$ the defect $\nu(\mathcal{L})$ is determined by the intersection lattice of $\mathcal{L}$. More precisely, if $\mathcal{L}_{1}$ and $\mathcal{L}_{2}$ are two line arrangements that have isomorphic intersection lattices, then $\nu(\mathcal{L}_{1}) = \nu(\mathcal{L}_{2})$.
\end{conjecture}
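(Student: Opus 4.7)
The plan is to reduce the conjecture, via Dimca's formula in Theorem \ref{deff}, to the combinatoriality of the three inputs $d$, $\tau(\mathcal{L})$ and $r := \mathrm{mdr}(\mathcal{L})$. Two of these are transparently combinatorial. Since isomorphic intersection lattices have the same number of rank-one elements, $d_{1}=d_{2}=d$. Each singular point of a line arrangement is an ordinary $m$-fold point with local Tjurina number $(m-1)^{2}$, so
$$\tau(\mathcal{L})=\sum_{p\in \mathrm{Sing}(\mathcal{L})}(m_{p}-1)^{2}$$
is read directly off the lattice. Applying Theorem \ref{deff} then expresses $\nu(\mathcal{L})$ explicitly in terms of $d$, $\tau(\mathcal{L})$ and $r$, so everything reduces to showing that $\mathrm{mdr}$ is a combinatorial invariant of line arrangements.

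This last step is the true obstacle, and it is genuinely hard: a positive answer would in particular imply Terao's freeness conjecture, because $\mathcal{L}$ is free iff $\nu(\mathcal{L})=0$, and the right-hand side in Theorem \ref{deff} depends only on $d$, $\tau$ and $r$. I would attack $\mathrm{mdr}$ by combining several structural ingredients. First, the Schenck--Tohăneanu and Dimca--Sticlaru bounds on $r$ coming from multiple points and sub-pencils give lattice-theoretic lower bounds on $r$. Second, the presence of pencils or near-pencils forces small values of $r$ that can be detected by Yoshinaga-style criteria. Third, Dimca's bound on the maximal admissible $\tau$ for fixed $r$ pins down $r$ uniquely once $\tau(\mathcal{L})$ is close enough to that maximum. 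Patching these together would already yield the conjecture on large classes of lattices: for instance, whenever the lattice forces $r$ to lie in only one of the two ranges $r<(d-1)/2$ and $r\geq (d-2)/2$ of Theorem \ref{deff}, the formula and the combinatoriality of $\tau$ immediately give $\nu(\mathcal{L}_{1})=\nu(\mathcal{L}_{2})$.

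The principal obstruction is therefore producing a purely combinatorial description of $\mathrm{mdr}(\mathcal{L})$ in the remaining cases, where two realizations of the same lattice could in principle have Jacobian syzygies in different degrees; this is the precise analogue of the classical difficulty that obstructs Terao's conjecture. A realistic intermediate goal would be to verify the statement on supersolvable and inductively free lattices (where $r$ is read from the exponents), on generic lattices (where $r=d-2$), and on arrangements with $\tau(\mathcal{L})$ close to the Dimca--du Plessis--Wall upper bound, and then to isolate any hypothetical counterexample as an arrangement whose $\mathrm{mdr}$ drops strictly below the combinatorial lower bound in a coordinate-dependent way, leaving that as the sole remaining battleground.
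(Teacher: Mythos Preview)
The statement you are addressing is an open \emph{conjecture}, not a theorem, and the paper does not attempt to prove it: immediately after stating it, the author writes that it ``seems to be extremely difficult'' and refers to Dimca's survey \cite{SurDim}. So there is no proof in the paper to compare your proposal against.

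Your proposal is, accordingly, not a proof either, and you say so yourself: you correctly reduce the question via Theorem~\ref{deff} to the combinatoriality of $d$, $\tau$ and $r=\mathrm{mdr}$, observe that the first two are lattice data, and then identify the combinatoriality of $\mathrm{mdr}$ as the genuine obstruction, explicitly noting that it would imply Terao's conjecture. This is an accurate diagnosis of why the conjecture is hard, and it matches exactly the paper's framing of the statement as a ``vast generalization of Terao's freeness conjecture''. What you have written is a reasonable research outline, not a proof, and the honest acknowledgement of the ``remaining battleground'' is appropriate.

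One refinement worth recording: your reduction is slightly stronger than what the conjecture actually requires. In the second case of Theorem~\ref{deff}, namely $r\geq (d-2)/2$, the formula $\nu(C)=\lceil \tfrac{3}{4}(d-1)^{2}\rceil-\tau(C)$ does not involve $r$ at all. Hence two lattice-isomorphic arrangements $\mathcal{L}_{1},\mathcal{L}_{2}$ could in principle have \emph{different} values of $\mathrm{mdr}$, both at least $(d-2)/2$, and still satisfy $\nu(\mathcal{L}_{1})=\nu(\mathcal{L}_{2})$. So the conjecture is not literally equivalent to the combinatoriality of $\mathrm{mdr}$; it is a priori weaker. This does not make the problem any more tractable, but it does mean that a counterexample to the combinatoriality of $\mathrm{mdr}$ (if one existed) would not automatically refute the conjecture.
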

This conjecture seems to be extremely difficult and for more details about it we refer the reader to an excellent recent survey by Dimca \cite{SurDim}. In the case of our note, we focus on the case of irreducible plane curves, and in order to present the main motivation for our research we need two additional definitions.
\begin{definition}
 A \textbf{plane rational cuspidal curve} is a rational curve $C \subset \mathbb{P}^{2}_{\mathbb{C}}$ having only unibranch singularities.   
\end{definition}
It is also necessary to introduce another important class of curves that was defined in \cite{DimStic}.
\begin{definition}
A reduced curve $C \subset \mathbb{P}^{2}_{\mathbb{C}}$ is \textbf{nearly free} if $\nu(C) = 1$.
\end{definition}
In the light of the above definitions, we have the following truly surprising conjecture.
\begin{conjecture}
Any rational cuspidal curve $C$ is either free or nearly free.
\end{conjecture}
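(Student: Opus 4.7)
By Theorem \ref{deff}, the conjecture is equivalent to showing that for every rational cuspidal curve $C$ of degree $d$, the pair $(\mathrm{mdr}(C),\tau(C))$ forces the defect formula to take a value in $\{0,1\}$. The natural strategy is therefore to first compute $\tau(C)$ from the rationality plus unibranch hypothesis, then track $\mathrm{mdr}(C)$, and finally insert both into Theorem \ref{deff}.

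For the first step, every $p\in \mathrm{Sing}(C)$ is unibranch, so $\mu_p=2\delta_p$, and the genus-delta formula applied to the rational curve $C$ yields
\[
\sum_{p\in \mathrm{Sing}(C)}\mu_p \;=\; (d-1)(d-2).
\]
Under the additional assumption that every singularity is quasi-homogeneous (so that Saito's criterion gives $\tau_p=\mu_p$), this becomes $\tau(C)=(d-1)(d-2)$. Substituting into the first regime of Theorem \ref{deff} yields $\nu(C)=(d-1)-r(d-1-r)$, which is non-negative only for $r=1$ (plus finitely many small-degree sporadic cases with $r\in\{2,3\}$) and then equals $1$ (or $0$ in the sporadic cases). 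Substituting into the second regime of Theorem \ref{deff} produces an analogous short finite list. The quasi-homogeneous case of the conjecture therefore reduces to a brief numerical verification.

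The main obstacle is the non-quasi-homogeneous case. There $\tau(C)<(d-1)(d-2)$ strictly, the formula of Theorem \ref{deff} returns a larger candidate value of $\nu(C)$, and one must show that $\mathrm{mdr}(C)$ adjusts just enough to keep $\nu(C)\le 1$. I see no purely formal mechanism that forces this compensation: it appears to require a tight inequality linking $\mathrm{mdr}(C)$ to the total Tjurina defect $\sum_p(\mu_p-\tau_p)$ and sensitive to the Puiseux characteristic exponents of each singularity. In the absence of such an inequality, the only currently viable route I see is case-by-case verification on the Flenner-Zaidenberg and Fenske-Tono classification lists of rational cuspidal curves, family by family, while awaiting a structural theorem that would convert these checks into a uniform argument. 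This is the point at which I expect genuinely new techniques to be required.
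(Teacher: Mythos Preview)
The statement you are attempting to prove is presented in the paper as a \emph{conjecture}, not as a theorem; the paper gives no proof of it. It is quoted as motivation (attributed to Dimca--Sticlaru) for the paper's actual results, which go in a different direction: lower bounds on $\nu(C)$ for various families of non-rational or non-cuspidal irreducible curves, illustrating that rational cuspidal curves are exceptional. So there is no ``paper's own proof'' to compare against.

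Your write-up is honest about this: you do not claim a proof, and you correctly locate the obstruction in the non-quasi-homogeneous case, where $\tau(C)<(d-1)(d-2)$ and no purely numerical argument via Theorem~\ref{deff} can close the gap. Your quasi-homogeneous sketch is essentially the observation that $\nu(C)\ge 0$ together with $\tau(C)=(d-1)(d-2)$ pins down $r$ up to a short list; that part is fine as far as it goes, though the phrase ``brief numerical verification'' undersells the second-regime case, where one must still check $\lceil \tfrac{3}{4}(d-1)^2\rceil-(d-1)(d-2)\in\{0,1\}$, and this fails for $d\ge 6$ (the expression is negative), so one is again using non-negativity of $\nu$ to exclude that regime rather than verifying the conjecture there directly. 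In any event, since the paper itself treats the statement as open, your proposal should be read as a partial strategy toward an unsolved problem rather than as a candidate proof to be graded against a known one.
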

In the present note, strongly motivated by the above conjecture, we want to continue the idea of studying the defect for some natural classes of irreducible plane curves, since, apart from the above conjecture, \textbf{we do not have any general prediction or results devoted to such curves}.

Our first result is devoted to nodal curves.
\begin{definition}
We say that an irreducible and reduced curve $C_{d} \subset \mathbb{P}^{2}_{\mathbb{C}}$ of degree $d$ is \textbf{nodal} if every singular point of $C_{d}$ is an ordinary double point, i.e., a singular point having the local normal form $x^{2} + y^{2}=0$.
\end{definition}
\begin{remark}
We will refer to ordinary double points as nodes. Furthermore, if $n_{2}(C_{d})$ denotes the number of nodes of an irreducible and reduced plane curve $C_{d} \subset \mathbb{P}^{2}_{\mathbb{C}}$ of degree $d \geq 3$, then by the genus formula we have $n_{2}(C_{d}) \leq \frac{(d-1)(d-2)}{2}$.
\end{remark}
\begin{theoremA} Let $C_{d}$ be a nodal plane curve of degree $d\geq 4$. Then
$$\nu(C_{d}) \geq \frac{1}{4}(d^{2}-1).$$
In particular, the defect for nodal curves can be arbitrarily large.
\end{theoremA}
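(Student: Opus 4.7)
My plan is to combine Dimca's dichotomy in Theorem~\ref{deff} with the standard genus bound for irreducible nodal curves. Since every node has local Tjurina number equal to $1$, we immediately have $\tau(C_d) = n_2(C_d)$, and the genus formula mentioned in the preceding remark gives $n_2(C_d) \le \binom{d-1}{2}$.

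First I would observe the elementary optimization: the function $r \mapsto r(d-1-r)$ takes its maximum value $(d-1)^2/4$ at $r = (d-1)/2$. So whenever $r={\rm mdr}(C_d) < (d-1)/2$ we have $r(d-1-r) \le (d-1)^2/4$ and Theorem~\ref{deff} yields
\[
\nu(C_d) = (d-1)^2 - r(d-1-r) - \tau(C_d) \ge \tfrac{3}{4}(d-1)^2 - n_2(C_d).
\]
In the complementary range $r \ge (d-2)/2$ the second bullet of Theorem~\ref{deff} directly gives the same lower bound $\nu(C_d) \ge \tfrac{3}{4}(d-1)^2 - n_2(C_d)$ (the ceiling only helps). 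Thus, regardless of which case of Theorem~\ref{deff} applies, the same clean lower bound is available; this unification is the key step.

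Next I would insert the genus bound $n_2(C_d) \le (d-1)(d-2)/2$ and simplify:
\[
\nu(C_d) \ge \frac{3(d-1)^2}{4} - \frac{(d-1)(d-2)}{2}
= \frac{(d-1)\bigl(3(d-1)-2(d-2)\bigr)}{4}
= \frac{(d-1)(d+1)}{4} = \frac{d^2-1}{4},
\]
which is exactly the claimed inequality. The ``arbitrarily large'' addendum follows at once since the right-hand side tends to infinity with $d$.

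I do not anticipate any serious obstacle: the argument amounts to (i) recognizing that each node contributes $1$ to $\tau$, (ii) observing that the quadratic $r(d-1-r)$ is bounded by $(d-1)^2/4$ so both branches of Theorem~\ref{deff} give the same lower bound, and (iii) invoking the genus inequality. The only point requiring a moment's care is verifying that the ceiling in the second case of Theorem~\ref{deff} does not disturb the estimate, which is immediate since $\lceil x\rceil \ge x$.
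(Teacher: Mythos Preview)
Your argument is correct. The difference from the paper's proof is in how you handle the two branches of Theorem~\ref{deff}. The paper first pins down which branch applies: since every node has log canonical threshold $1$, the Arnold exponent of $C_d$ equals $1$, and the Dimca--Sernesi bound (Theorem~\ref{sern}) gives ${\rm mdr}(C_d)\ge d-2\ge (d-2)/2$, forcing the second formula $\nu(C_d)=\lceil\tfrac{3}{4}(d-1)^2\rceil-\tau(C_d)$. You instead sidestep this entirely by the elementary observation that $r(d-1-r)\le (d-1)^2/4$ makes \emph{both} branches of Theorem~\ref{deff} yield the same lower bound $\nu(C_d)\ge\tfrac{3}{4}(d-1)^2-\tau(C_d)$, so no information about ${\rm mdr}$ is needed at all. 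Your route is more self-contained for this particular statement (it avoids Theorem~\ref{sern} and any discussion of Arnold exponents), while the paper's route actually establishes the stronger fact ${\rm mdr}(C_d)\ge d-2$ and fits the uniform template used for Theorems~B--D.
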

The next result is devoted to irreducible and reduced plane curves of genus zero admitting only nodes and ordinary triple points as singularities.
\begin{theoremB}
There exists an irreducible and reduced plane curve $K_{3k}$ of degree $d=3k$ with $k\geq 3$ of genus zero that admits exactly $2k$ ordinary triple points and nodes as singularities such that
$$\nu(K_{3k}) \geq \frac{1}{4}(9k+1)(k-1).$$
\end{theoremB}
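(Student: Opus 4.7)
My plan has three stages: constructing the curve, computing its Tjurina number from the genus formula, and observing that Theorem~\ref{deff} produces the claimed bound regardless of the value of $\mathrm{mdr}(K_{3k})$.

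The main geometric input is the existence, for every $k\geq 3$, of an irreducible rational plane curve $K_{3k}\subset\mathbb{P}^{2}_{\mathbb{C}}$ of degree $3k$ whose only singularities of multiplicity $\geq 3$ are $n_{3}=2k$ ordinary triple points, all remaining singularities being nodes. One natural route is to exhibit a rational parametrization $\varphi\colon\mathbb{P}^{1}\to\mathbb{P}^{2}$ of degree $3k$ with $2k$ prescribed triple fibres of distinct tangent directions and otherwise generic behaviour; alternatively, one may smooth, inside a suitable Severi variety, a carefully chosen reducible configuration (for instance a union of lower-degree rational curves forced to meet in the required triple points). The hard part of the theorem really lives in this existence step; the remainder is formal.

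Once $K_{3k}$ is in hand, the genus formula for an irreducible rational plane curve with only nodes and ordinary triple points reads
\[
0 \;=\; \frac{(3k-1)(3k-2)}{2} - n_{2} - 3n_{3},
\]
and with $n_{3}=2k$ it forces $n_{2} = (9k^{2}-21k+2)/2$. Since a node contributes $1$ and an ordinary triple point contributes $4$ to the Tjurina number (both are weighted homogeneous, so $\tau_{p}$ equals the Milnor number), this gives
\[
\tau(K_{3k}) \;=\; n_{2} + 4\cdot 2k \;=\; \frac{9k^{2}-5k+2}{2}.
\]

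To finish, the plan is to invoke Theorem~\ref{deff}. The key observation is that, for any reduced plane curve of degree $d$ with $r=\mathrm{mdr}$, the inequality $\nu \geq \tfrac{3}{4}(d-1)^{2} - \tau$ holds unconditionally: if $r\geq (d-2)/2$ this is the second case of Theorem~\ref{deff} directly (the ceiling only helps), while if $r<(d-1)/2$ the first case gives $\nu = (d-1)^{2} - r(d-1-r) - \tau$, and $r(d-1-r) \leq (d-1)^{2}/4$ by AM--GM. Substituting $d=3k$ and the value of $\tau(K_{3k})$ above produces
\[
\nu(K_{3k}) \;\geq\; \frac{3(3k-1)^{2}}{4} - \frac{9k^{2}-5k+2}{2} \;=\; \frac{(9k+1)(k-1)}{4},
\]
which is the claimed lower bound.
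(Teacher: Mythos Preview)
Your argument is correct, and the arithmetic checks out: $\tau(K_{3k})=(9k^{2}-5k+2)/2$ and $\tfrac{3}{4}(3k-1)^{2}-\tau(K_{3k})=\tfrac{1}{4}(9k+1)(k-1)$. The existence step is only sketched, but the paper itself simply cites a reference (\cite[3.4 Theorem]{GM}) for this, so you are not missing anything the paper actually proves.

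Where you genuinely diverge from the paper is in how you handle $\mathrm{mdr}$. The paper computes the Arnold exponent $\alpha_{K_{3k}}=\tfrac{2}{3}$, invokes the Dimca--Sernesi bound (Theorem~\ref{sern}) to obtain $\mathrm{mdr}(K_{3k})\geq 2k-2$, and then uses the hypothesis $k\geq 3$ to check $2k-2>(3k-2)/2$, landing in the second case of Theorem~\ref{deff} and thereby getting an \emph{exact} formula $\nu(K_{3k})=\lceil\tfrac{3}{4}(3k-1)^{2}\rceil-\tau(K_{3k})$ before passing to the stated inequality. Your observation that $r(d-1-r)\leq (d-1)^{2}/4$ makes the lower bound $\nu\geq\tfrac{3}{4}(d-1)^{2}-\tau$ hold uniformly across both cases of Theorem~\ref{deff} is a nice shortcut: it bypasses Theorem~\ref{sern} entirely and shows that the inequality in the statement does not actually depend on knowing $\mathrm{mdr}$ or on the restriction $k\geq 3$. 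The trade-off is that the paper's route yields the sharper equality involving the ceiling, whereas yours gives only the claimed lower bound --- which is, however, all that Theorem~B asserts.
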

\noindent
Here by an ordinary triple point we mean a singularity defined by the local normal form $y^{2}x + x^{3}=0$.

Finally, we focus on certain cuspidal curves that were constructed by Ivinskis \cite{Ivinskis}.
\begin{theoremC}
There exists an irreducible and reduced plane curve $C_{6k}$ of degree $d=6k$ with $k\geq 1$ that admits exactly $9k^2$ ordinary cusps and no other singularities such that
$$\nu(C_{6k}) = 9k^2-9k+1 = g(C_{6k}),$$
where $g(C_{6k})$ denotes the genus of $C_{6k}$.
In particular, $C_{6}$ is nearly free.
\end{theoremC}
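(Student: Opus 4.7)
The plan is to combine Ivinskis's explicit construction of the family $\{C_{6k}\}$ with Theorem~\ref{deff}, passing through an Arnold-exponent lower bound on $\mathrm{mdr}$ that forces us into the second case of that theorem.

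First I would invoke Ivinskis's construction \cite{Ivinskis}: for every $k\ge 1$ there is an irreducible and reduced plane curve $C_{6k}$ of degree $d=6k$ whose only singularities are $9k^{2}$ ordinary cusps (i.e., $A_{2}$-singularities). Since each $A_{2}$-point has $\tau_{p}=2$ and local $\delta$-invariant $\delta_{p}=1$, the numerical invariants are immediate: $\tau(C_{6k})=2\cdot 9k^{2}=18k^{2}$, and the genus formula for an irreducible curve with only cusps gives
\[
g(C_{6k}) \;=\; \binom{6k-1}{2}-9k^{2} \;=\; 9k^{2}-9k+1.
\]

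The technical heart of the argument is to push $r:=\mathrm{mdr}(C_{6k})$ into the range $r\ge (d-2)/2 = 3k-1$, which is the high-$r$ case of Theorem~\ref{deff}. An $A_{2}$-singularity has Arnold (minimal spectral) exponent $\alpha = 5/6$, and by the standard Arnold-exponent lower bound on the minimal degree of a syzygy for a reduced plane curve,
\[
r \;\ge\; \alpha\cdot d - 2 \;=\; \tfrac{5}{6}\cdot 6k - 2 \;=\; 5k-2 \;\ge\; 3k-1
\]
for every $k\ge 1$. Hence we may apply Theorem~\ref{deff} in its second case, and obtain
\[
\nu(C_{6k}) \;=\; \Bigl\lceil \tfrac{3}{4}(6k-1)^{2}\Bigr\rceil - 18k^{2} \;=\; (27k^{2}-9k+1)-18k^{2} \;=\; 9k^{2}-9k+1 \;=\; g(C_{6k}),
\]
and specialising to $k=1$ gives $\nu(C_{6})=1$, so $C_{6}$ is nearly free.

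The one non-routine step is the Arnold-exponent lower bound on $\mathrm{mdr}$: all other inputs are either Ivinskis's construction or direct formulas. The decisive feature is that the local Arnold exponent $\alpha = 5/6$ is the same at every singular point of $C_{6k}$ and does not degrade with $k$, so the resulting bound $\mathrm{mdr}(C_{6k})\ge \tfrac{5}{6}d-2$ beats the threshold $(d-2)/2$ of Theorem~\ref{deff} uniformly across the entire Ivinskis family.
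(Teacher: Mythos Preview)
Your proof is correct and follows essentially the same approach as the paper: invoke Ivinskis's construction, use the Arnold exponent $\alpha=5/6$ together with the Dimca--Sernesi bound (Theorem~\ref{sern}) to force $\mathrm{mdr}(C_{6k})\ge 5k-2\ge (d-2)/2$, and then apply the second case of Theorem~\ref{deff} with $\tau(C_{6k})=18k^{2}$. The only cosmetic difference is that the paper spells out the Kummer-cover origin of Ivinskis's curves, which you cite as a black box.
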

\noindent
For completeness, recall that an ordinary cusp is a singularity defined by the local normal form $y^2 + x^{3} = 0$.

Our results show that rational cuspidal plane curves are very special and it allows us to justify the heuristic phenomenon that it is very difficult to construct irreducible free or nearly free curves.

Before we present the proofs, we need to recall very useful tools that we are going to use in our note. We start with the following crucial result \cite[Theorem 2.1]{DimcaSernesi}.
\begin{theorem}[Dimca-Sernesi]
\label{sern}
Let $C \, = \, \{ f=0 \}$ be a reduced curve of degree $d$ in $\mathbb{P}^{2}_{\mathbb{C}}$ having only quasi-homogeneous singularities. Then $${\rm mdr}(f) \geq \alpha_{C}\cdot d - 2,$$
where $\alpha_{C}$ denotes the Arnold exponent of $C$.
\end{theorem}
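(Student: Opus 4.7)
The plan is to combine a formal identification of the lowest non-zero degree of ${\rm AR}(f)$ with that of the Jacobian module $N(f)$, together with a Hodge-theoretic vanishing for $N(f)$ in low degrees that is controlled by the Arnold exponent whenever all singularities are quasi-homogeneous. First I would make explicit the relation between ${\rm AR}(f)$ and $N(f)$ by unwinding the Koszul complex on the partial derivatives $\partial_{x} f,\partial_{y} f,\partial_{z} f$: a non-trivial syzygy $(a,b,c)\in {\rm AR}(f)_{r}$ contributes, up to a controlled shift, to a non-zero class in $N(f)$ at a specific graded piece determined by $r$ and the ambient twists. This step is a diagram chase using only the definitions, the Euler relation $\delta_{E}(f)=d\cdot f$ and saturation with respect to the irrelevant ideal $\mathfrak{m}$.

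Next I would invoke M.~Saito's comparison between the pole-order and Hodge filtrations on the cohomology of the global Milnor fibre $F=\{f=1\}$ of the affine cone over $C$. Under the standing hypothesis that each singularity of $C$ is quasi-homogeneous (so that the local Tjurina number $\tau_{p}$ coincides with the local Milnor number $\mu_{p}$ and the local spectrum is symmetric around the correct central value), this comparison yields the key vanishing
$$N(f)_{k}=0 \quad\text{for}\quad k<\alpha_{C}\cdot d-2,$$
where the Arnold exponent $\alpha_{C}=\min_{p\in {\rm Sing}(C)}\alpha_{p}$ is precisely the smallest spectral number appearing among all singular points. The quasi-homogeneity assumption is essential here: it is what allows one to translate a purely local spectral statement into a clean vanishing range for the global graded module $N(f)$.

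Combining the two ingredients, a minimal-degree syzygy of degree $r$ produces a non-zero element in a graded piece of $N(f)$ which, by the vanishing in the previous step, forces $r\geq \alpha_{C}\cdot d-2$, as claimed. The main technical obstacle is exactly the Hodge-theoretic vanishing in the second step: it rests on Saito's mixed Hodge module machinery and on the identification of the Arnold exponent with the smallest spectral number of the singularity, and neither ingredient is available without the quasi-homogeneity hypothesis. The remainder of the argument is essentially bookkeeping with degree shifts and a careful application of the Euler relation.
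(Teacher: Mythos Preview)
The paper does not prove this theorem at all: it is quoted verbatim from \cite[Theorem~2.1]{DimcaSernesi} and then used as a black box in the proofs of Theorems~A--D. So there is no ``paper's own proof'' to compare against; what one can compare is your sketch with the actual Dimca--Sernesi argument.

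Your second step is exactly the right idea and is indeed the heart of the Dimca--Sernesi proof: Saito's comparison of the pole-order and Hodge filtrations on the cohomology of the Milnor fibre $F=\{f=1\}$, together with the identification of $\alpha_{C}$ with the smallest spectral number, is precisely what produces the bound. The quasi-homogeneity hypothesis enters just where you say it does.

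The problematic part is your first step. You claim that a non-trivial syzygy $(a,b,c)\in {\rm AR}(f)_{r}$ ``contributes, up to a controlled shift, to a non-zero class in $N(f)$''. As stated this cannot be correct: for any \emph{free} curve one has $N(f)=0$ identically, while ${\rm AR}(f)$ is a non-zero free module of rank two, so minimal syzygies certainly exist without producing anything in $N(f)$. Thus the bridge ``syzygy $\Rightarrow$ non-zero class in $N(f)$'' fails in general, and the Koszul diagram chase you allude to does not yield it.

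In the actual Dimca--Sernesi argument the intermediate object is not $N(f)$ but the cohomology of the complement $U=\mathbb{P}^{2}_{\mathbb{C}}\setminus C$ (equivalently, the monodromy eigenspaces of $H^{*}(F)$). A syzygy $\rho=(a,b,c)\in {\rm AR}(f)_{r}$ gives the rational $2$-form
\[
\omega(\rho)=\frac{a\,dy\wedge dz - b\,dx\wedge dz + c\,dx\wedge dy}{f},
\]
which has a pole of order one along $C$ and hence defines a class in a specific level of the pole filtration on $H^{2}(U)$. Saito's theorem then says, under the quasi-homogeneity assumption, that this level of the pole filtration coincides with the corresponding Hodge filtration level, and the latter vanishes for $r<\alpha_{C}\cdot d-2$. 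This forces $\omega(\rho)$, and hence $\rho$, to be trivial in that range, giving ${\rm mdr}(f)\geq \alpha_{C}\cdot d-2$. So your overall strategy is sound, but the correct conduit from syzygies to Hodge theory is rational differential forms on the complement, not the Jacobian module $N(f)$.
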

The Arnold exponent of a reduced curve $C \subset \mathbb{P}^{2}_{\mathbb{C}}$ is defined as the minimum over all log canonical thresholds ${\rm lct}_{p}(C)$ for $p \in {\rm Sing}(C)$. In the case when our singularities are just ordinary, we have the following result \cite[Theorem 1.3]{Cheltsov}. 
\begin{theorem}
Let $C$ be a reduced curve in $\mathbb{C}^{2}$ which has degree $m$ and let $p \in {\rm Sing}(C)$. Then ${\rm lct}_{p}(C) \geq \frac{2}{m}$, and the equality holds if and only if $C$ is a union of $m$ lines passing through $p$.
\end{theorem}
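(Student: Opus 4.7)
My approach is as follows. After a coordinate change, set $p = 0 \in \mathbb{C}^2$ and write $C = \{f = 0\}$ with $f \in \mathbb{C}[x,y]$ reduced of degree $m$. Let $\mu := \text{mult}_p(C)$; since $p$ is singular, $\mu \geq 2$, and always $\mu \leq m$.

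I would first dispose of the equality case $\mu = m$. In the homogeneous decomposition $f = f_\mu + f_{\mu+1} + \dots + f_m$ there are no terms of degree below $\mu$ (by the definition of the multiplicity), and since $\deg f = m = \mu$ there are no terms of degree above $\mu$ either, so $f = f_m$ is a nonzero homogeneous polynomial of degree $m$. Reducedness forces $f_m$ to factor as a product of $m$ distinct linear forms, so $C$ is a union of $m$ concurrent lines through $p$ --- an ordinary $m$-fold point. A single blow-up at $p$ already yields an snc log resolution on which the exceptional divisor appears with coefficient $\lambda\mu - 1$; this equals $1$ precisely at $\lambda = 2/m$, which one verifies is the log canonical threshold.

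For the lower bound $\text{lct}_p(C) \geq 2/m$ when $\mu < m$, my plan is to iterate blow-ups. Let $\sigma : Y \to \mathbb{C}^2$ be the blow-up of $p$ with exceptional $E$ and strict transform $\tilde C$; the discrepancy formula
\[
\sigma^*(K + \lambda C) \;=\; K_Y + \lambda \tilde C + (\lambda \mu - 1)\, E
\]
reduces log canonicity at $p$ to log canonicity along $E$. For $\lambda = 2/m$ and $\mu < m$ the coefficient $2\mu/m - 1$ of $E$ is strictly less than $1$, so $E$ itself is not an obstruction; the only potential obstructions are the finitely many points of $\tilde C \cap E$ at which $\tilde C$ is singular on $Y$ or tangent to $E$. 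I would iterate the blow-up at such points to produce an embedded resolution of $(C, p)$, and the question reduces to a discrepancy-bookkeeping check on the successive exceptional divisors.

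The main obstacle is assembling this bookkeeping into a uniform coefficient bound, and the crucial input is the global degree $m$ of $f$. A Bezout-type comparison of $C$ with auxiliary curves through $p$ (e.g.\ generic smooth ones) bounds the multiplicity sequence $\mu_0 = \mu, \mu_1, \mu_2, \ldots$ of infinitely near points of $C$ above $p$ in terms of $m$, and it is this global bound --- rather than any purely local information about the germ $(C, p)$ --- that prevents the accumulated coefficients in the resolution from exceeding $1$ at $\lambda = 2/m$. Once this translation from the global degree to local discrepancy inequalities is in place, $\text{lct}_p(C) \geq 2/m$ follows; the initial slack $2\mu/m - 1 < 1$ at the first blow-up propagates to give strict inequality whenever $\mu < m$, so equality can only occur in the case $\mu = m$ analyzed above, completing the characterization.
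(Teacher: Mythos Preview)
The paper does not actually prove this statement: it is quoted verbatim as \cite[Theorem 1.3]{Cheltsov} and used as a black box to compute Arnold exponents. So there is no in-paper argument to compare your proposal against.

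As for the proposal itself, your treatment of the equality case $\mu = m$ is clean and correct: homogeneity plus reducedness forces $m$ concurrent lines, and a single blow-up computes $\mathrm{lct}_p = 2/m$. The weak point is the general inequality. You correctly identify that the bound $2/m$ is \emph{global} (it involves the degree, not just the germ), and you propose to feed this in via a ``Bezout-type comparison'' controlling the multiplicity sequence of infinitely near points. But this is exactly the heart of the matter, and you have not indicated what inequality you would actually prove or how Bezout enters. Note that bounding each $\mu_i$ by $m$ individually is far too weak: the discrepancies along a long chain of blow-ups accumulate, so one needs a bound on a suitable weighted sum of the $\mu_i$, and it is not obvious which intersection-theoretic identity delivers this. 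Until that step is made precise, the plan is a reasonable outline rather than a proof; Cheltsov's argument (which is what the paper invokes) is where the real work is done.
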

\noindent
By the above result, if $p \in \mathbb{C}^{2}$ is an ordinary singularity of multiplicity $r$ of $C$, then 
\begin{equation}
{\rm lct}_{p}(C) = \frac{2}{r}.
\end{equation}
Furthermore, if $q \in C$ is an ordinary cusp, then by \cite[Example 1.5]{Cheltsov} we have
\begin{equation}
{\rm lct}_{q}(C) = \frac{5}{6}.  
\end{equation}
Now we are ready to present our proof of \textbf{Theorem A}.
\begin{proof}
 The existence of nodal curves is granted by a result due to Severi \cite{Severi}. Since all singular points $p \in {\rm Sing}(C_{d})$ are nodes, we have ${\rm lct}_{p}(C_{d}) =1$, and the Arnold exponent of $C_{d}$ is equal to
 $$\alpha_{C_{d}} = 1.$$
 Then by Theorem \ref{sern} we have
 $${\rm mdr}(C_{d}) \geq d-2.$$
 By the assumption $d\geq 4$, so the following inequality holds
 $$d - 2 \geq \frac{d-2}{2},$$
 which means that by Theorem \ref{deff} the defect of $C_{d}$ is equal to 
 $$\nu(C_{d}) = \bigg\lceil\frac{3}{4}(d-1)^{2}\bigg\rceil - \tau(C_{d}).$$
 Now we want to find an upper bound on $\tau(C_{d})$. First of all, since all singularities of $C_{d}$ are nodes, one has $\tau_{p}(C_{d}) = 1$ for every $p \in {\rm Sing}(C_{d})$. Since the number of nodes of $C_{d}$ is bounded from above by $\frac{(d-1)(d-2)}{2}$, we get
 $$\tau(C_{d}) \leq \frac{(d-1)(d-2)}{2}.$$
 Taking into account the above inequality, we finally get 
 $$\nu(C_{d}) \geq \frac{3}{4}(d-1)^{2} - \frac{(d-1)(d-2)}{2} = \frac{1}{4}(d^{2}-1),$$
 which completes the proof.
\end{proof}
Now we pass to our proof of \textbf{Theorem B}.
\begin{proof}
The existence of such irreducible curves $K_{3k}$ of genus zero with $n_{3} = 2k$ ordinary triple points and nodes as the only singularities of $K_{3k}$ is granted by \cite[3.4 Theorem]{GM}. The condition that $K_{3k}$ has genus zero means that the curve has exactly 
$$n_{2} = \frac{9k^2 - 21k + 2}{2}$$
nodes as singularities.
Since curve $K_{3k}$ admits only nodes and ordinary triple points as singularities we get
$$\alpha_{K_{3k}} = {\rm min} \bigg\{1, \frac{2}{3}\bigg\} = \frac{2}{3},$$
 and then by Theorem \ref{sern}
$${\rm mdr}(K_{3k}) \geq \frac{2}{3}\cdot 3k-2 = 2k-2.$$
Since $k\geq 3$, we have
$$2k-2 > \frac{3k-2}{2},$$
so the defect of $K_{3k}$, by using Theorem \ref{deff}, can be bound from below
$$\nu(K_{3k}) =  \bigg\lceil\frac{3}{4}(3k-1)^{2}\bigg\rceil - 4 \cdot 2k - \frac{9k^2 - 21k + 2}{2} \geq \frac{1}{4}(9k+1)(k-1),$$
which completes the proof.
\end{proof}
Finally, we present our proof of \textbf{Theorem C}.
\begin{proof}
 We start by showing the existence of curves $C_{6k}$ with $k \geq 1$. In his Diplomarbeit, Ivinskis shows that there exists an irreducible and reduced curve $C_{6k}$ of degree $6k$ with $k\geq 1$ having exactly $9k^{2}$ ordinary cusps \cite[Lemma~4.1.7]{Ivinskis}. This curve is constructed using the Kummer cover $\kappa: \mathbb{P}^{2}_{\mathbb{C}} \ni (x,y,z) \mapsto (x^{k},y^{k},z^{k}) \in \mathbb{P}^{2}_{\mathbb{C}}$ applied to an irreducible and reduced sextic with exactly $9$ ordinary cusps. Recall that such an irreducible sextic is the dual curve to a smooth elliptic curve $E$, and the ordinary cusps correspond to the $9$ inflection points of $E$.

 Since our curve $C_{6k}$ admits only ordinary cusps as singularities,
 $$\alpha_{C_{6k}} = \frac{5}{6}$$
 and by Theorem \ref{sern} we have
 $${\rm mdr}(C_{6k}) \geq \frac{5}{6}\cdot 6k - 2 = 5k-2.$$
 Since for $k\geq 1$ one has
 $$5k-2 > \frac{6k-2}{2} = 3k - 1,$$
 and $\tau(C_{6k}) = 2\cdot 9k^{2} = 18k^{2}$, by Theorem \ref{deff}
 the defect of $C_{6k}$ is equal to
 $$\nu(C_{6k}) =  \bigg\lceil\frac{3}{4}(6k-1)^{2}\bigg\rceil - 18k^{2}.$$
 Observe that
 $$\bigg\lceil\frac{3}{4}(6k-1)^{2}\bigg\rceil = \bigg\lceil 27k^2 -9k +\frac{3}{4}\bigg\rceil = 27k^{2}-9k+1,$$
 and then
 $$\nu(C_{6k}) = 27k^{2}-9k+1 - 18k^2 = 9k^{2}-9k+1.$$
 In particular, for $k=1$ our curve $C_{6k}$ is an irreducible sextic with $9$ ordinary cusps with $\nu(C_{6})=1$, so $C_{6}$ is nearly free.
\end{proof}
\begin{remark}
Our curves $C_{6k}$ considered above are obviously not rational since
$$g(C_{6k}) = 9k^2 - 9k+1 \geq 1.$$
Moreover, it shows that $g(C_{6k}) = \nu(C_{6k})$, and this is very surprising that these two values coincide.
\end{remark}
Let us now present the main result of the note. Our result is devoted to reduced simply singular plane curves, i.e., reduced plane curves with only ${\rm ADE}$ singularities.

\begin{theoremD}[Non-freeness criterion]
\label{nfc}
Let $C \subset \mathbb{P}^{2}_{\mathbb{C}}$ be a reduced plane curve of even degree $d=2m\geq 4$ admitting only ${\rm ADE}$ singularities. Assume furthermore that the Arnold exponent of $C$ satisfies $\alpha_{C} \geq \frac{1}{2} + \frac{1}{m}$. Then
$$\nu(C) \geq 1.$$
In particular, $C$ is not free.
\end{theoremD}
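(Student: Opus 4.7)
The plan is to combine the Dimca--Sernesi lower bound on ${\rm mdr}(C)$ with the second-case formula of Theorem~\ref{deff}, and then to rule out $\nu(C)=0$ by comparing the exponents that freeness would force against the value of ${\rm mdr}(C)$ produced by the Arnold exponent hypothesis. Since all ADE singularities are quasi-homogeneous, Theorem~\ref{sern} applies and yields
$$r := {\rm mdr}(C) \geq \alpha_{C}\cdot d - 2 \geq \bigg(\frac{1}{2}+\frac{1}{m}\bigg)\cdot 2m - 2 = m.$$
Because $d=2m$, we have $r \geq m > m-1 = (d-2)/2$, placing us in the second branch of Theorem~\ref{deff}, so $\nu(C) = \lceil 3(d-1)^{2}/4\rceil - \tau(C)$. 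Using that $d-1 = 2m-1$ is odd, a short computation gives $\lceil 3(d-1)^{2}/4\rceil = 3m^{2} - 3m + 1$.

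To finish, I would argue by contradiction: suppose $\nu(C)=0$, so that $C$ is free. Then by the standard structure theorem for free plane curves, there exist exponents $d_{1}\leq d_{2}$ with $d_{1}+d_{2} = d-1 = 2m-1$, ${\rm mdr}(C) = d_{1}$, and $\tau(C) = (d-1)^{2} - d_{1} d_{2}$. Setting $\tau(C) = 3m^{2} - 3m + 1$ forces $d_{1} d_{2} = m(m-1)$, which together with $d_{1} + d_{2} = 2m-1$ uniquely yields $(d_{1},d_{2}) = (m-1,m)$. But then ${\rm mdr}(C) = m-1 < m \leq r$, contradicting the first step. Hence $\nu(C)\geq 1$, and in particular $C$ is not free.

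The essential feature of the argument is the parity of $d$: since $d-1$ is odd, it cannot be split into two equal integer exponents, and this forces the smaller exponent of any hypothetical free $C$ to be exactly one below the value allowed by the Arnold exponent bound. I do not anticipate any serious obstacle: verifying that Theorem~\ref{sern} applies is immediate from the quasi-homogeneity of ADE singularities, and the ceiling computation is elementary. If the evenness hypothesis were dropped, $d-1$ would be even and one could have $(d_{1},d_{2}) = ((d-1)/2,(d-1)/2)$ realising ${\rm mdr}(C) = (d-1)/2$, so the parity is really what makes the statement work.
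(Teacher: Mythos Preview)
Your argument is correct, but it diverges from the paper's at the final step. Both proofs begin identically: Dimca--Sernesi gives $r={\rm mdr}(C)\geq m$, placing you in the second branch of Theorem~\ref{deff} with $\lceil 3(d-1)^{2}/4\rceil = 3m^{2}-3m+1$. From here the paper bounds $\tau(C)$ directly via the Du~Plessis--Wall inequality $\tau(C)\leq\tau_{\max}(2m,r)$, observes that $\tau_{\max}(2m,r)$ is decreasing on $[m,2m-1]$, and evaluates $\tau_{\max}(2m,m)=3m^{2}-3m$ to conclude $\nu(C)\geq 1$ outright. You instead argue by contradiction through the exponent structure of free curves, which is more elementary in that it avoids citing Du~Plessis--Wall; on the other hand, the paper's route yields an explicit upper bound on $\tau(C)$ valid for \emph{all} such curves, not only the hypothetically free ones. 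Incidentally, your detour through $\tau(C)=3m^{2}-3m+1$ and $d_{1}d_{2}=m(m-1)$ is unnecessary: from $d_{1}\leq d_{2}$ and $d_{1}+d_{2}=2m-1$ alone one already has $d_{1}\leq m-1<m\leq r$, which is the contradiction.
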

\begin{proof}
The condition that $\alpha_{C} \geq \frac{1}{2} + \frac{1}{m}$ ensures us that the defect of $C$ can be computed via the second formula in Theorem \ref{deff}, namely
 $$\nu(C) = \bigg\lceil\frac{3}{4}(2m-1)^{2}\bigg\rceil - \tau(C).$$
 Observe that $$\bigg\lceil\frac{3}{4}(2m-1)^{2}\bigg\rceil = 3m^{2}-3m+1,$$
 so the last thing that we need to estimate is $\tau(C)$. By Theorem \ref{sern} we get ${\rm mdr}(C) \geq m$, which follows from the fact that $\alpha_{C} \geq \frac{1}{2} + \frac{1}{m}$. Now, using a result due to Du Plessis and Wall in \cite[Theorem 3.2]{duP}, we see that
 $$\tau(C) \leq \tau_{{\rm max}}(2m,r) := (2m-1)(2m-r-1)+r^{2} -\binom{2r-2m+2}{2},$$
 where $r:={\rm mdr}(C)$.
 Since the function $\tau_{{\rm max}}(2m,r)$ is strictly decreasing as a function with respect to $r$ on the interval $[m, 2m-1]$, we get
 $$\tau(C) \leq  \tau_{{\rm max}}(2m,m) = 3m^{2}-3m,$$
 so we finally obtain
 $$\nu(C) = 3m^{2}-3m+1 -\tau(C) \geq 3m^{2}-3m+1 - \tau_{{\rm max}}(2m,m) = 1,$$
 which completes the proof.
\end{proof}
Now we present the following example to show that our main result is optimal.
\begin{example}
This example comes from \cite[7.5 Lemma]{Persson}. Fix an even integer $m \in \mathbb{Z}_{\geq 4}$ and consider the curve $\mathcal{C}_{2m} = \{C_{1}, C_{2}, C_{3}, C_{4}\} \subset \mathbb{P}^{2}_{\mathbb{C}}$, where
\begin{equation*}
\begin{array}{l}
C_{1} :\quad  x^{m/2} + y^{m/2} + z^{m/2} =0,\\
C_{2} :\, -x^{m/2} + y^{m/2} + z^{m/2} =0,\\
C_{3} :\quad  x^{m/2} - y^{m/2} + z^{m/2} =0,\\
C_{4} :\quad x^{m/2} + y^{m/2} - z^{m/2} =0.
\end{array}
\end{equation*}
Our curve $\mathcal{C}_{2m}$ is of degree $d=2m$ and it has $3m$ singularities of type $A_{m-1}$, see \cite[Lemma 7.5]{Persson}. In particular, for $m=4$ we obtain the arrangement of $4$ conics that admits exactly $12$ singularities of type $A_{3}$ -- it is well-known that this arrangement is unique up to the projective equivalence. 

Since $\mathcal{C}_{2m}$ admits only singularities of type $A_{m-1}$, for each $p \in {\rm Sing}(\mathcal{C}_{2m})$ one has ${\rm lct}_{p}=\frac{1}{2} + \frac{1}{m}$, so the Arnold exponent of $\mathcal{C}_{2m}$ is equal to 
$$\alpha_{\mathcal{C}_{2m}} = \frac{1}{2} + \frac{1}{m}.$$ By Theorem D, we have 
$$\nu(\mathcal{C}_{2m}) \geq 1.$$
In fact, based on \cite[Theorem 3.12]{Artal}, our curve $\mathcal{C}_{2m}$ is nearly free, i.e., $\nu(\mathcal{C}_{2m})=1$.
\end{example}
Finally, let us present an application of Theorem D in the setting of line arrangements.
\begin{example}
Consider an arrangement $\mathcal{L}$ of $d=2m$ lines, which only admits double and triple intersections. Then $\alpha_{\mathcal{L}} = \frac{2}{3}$, and if we assume that $m\geq 6$, then
$$\alpha_{\mathcal{L}} \geq \frac{1}{2} + \frac{1}{m}.$$
Using Theorem D we can conclude that there is no free arrangement of $d=2m\geq 12$ lines with double and triple intersections.
\end{example}

\section*{Acknowledgement}
I would like to thank Emilia Mezzetti for useful explanations regarding the content of \cite{GM}. I would also like to thank the anonymous referees for their valuable comments, which allowed me to improve the note.

Piotr Pokora is supported by the National Science Centre (Poland) Sonata Bis Grant  \textbf{2023/50/E/ST1/00025}. For the purpose of Open Access, the author has applied a CC-BY public copyright licence to any Author Accepted Manuscript (AAM) version arising from this submission.

\vskip 0.5 cm
\bigskip
Piotr Pokora,
Department of Mathematics,
University of the National Education Commission Krakow,
Podchor\c a\.zych 2,
PL-30-084 Krak\'ow, Poland. \\
\nopagebreak
\textit{E-mail address:} \texttt{piotr.pokora@up.krakow.pl}
\end{document}